\documentclass[12pt]{article}
\usepackage{amssymb,amsmath,amsthm}

\textwidth=15cm
\textheight=21cm
\topmargin=0.5cm
\oddsidemargin=0.5cm
\evensidemargin=0.5cm
\pagestyle{plain}

\def\ab{{\bold a}}

\def\xb{{\bold x}}
\def\yb{{\bold y}}

\def\0b{{\bold 0}}

\def\ab{{\bold a}}

\def\xb{{\bold x}}
\def\yb{{\bold y}}

\def\0b{{\bold 0}}

\usepackage{bm}
\bmdefine{\Bzero}{0}
\bmdefine{\Bone}{1}

\def\Bone{{\bf 1}}

\def\a{{\bf a}}

\def\RR{{\mathbb R}}
\def\ZZ{{\mathbb Z}}

\def\QQ{{\mathbb Q}}

\newtheorem{Theorem}{Theorem}[section]

\newtheorem{Corollary}[Theorem]{Corollary}
\newtheorem{Proposition}[Theorem]{Proposition}
\newtheorem{Remark}[Theorem]{Remark}

\newtheorem{Example}[Theorem]{Example}

\newtheorem{Conjecture}[Theorem]{Conjecture}

\def\cpoly{{\rm Cut}^\square(G)}
\def\cpolytope{{\rm Cut}^\square}

\title{Normality of cut polytopes of graphs is a minor closed property}
\author{Hidefumi Ohsugi}
\date{}

\begin{document}

\maketitle

\begin{abstract}
Sturmfels--Sullivant conjectured that the cut polytope of a graph
is normal if and only if the graph has no $K_5$ minor.
In the present paper, it is proved that
the normality of cut polytopes of graphs is a minor closed property.
By using this result, we have large classes of normal cut polytopes. 
Moreover, it turns out that, in order to study the conjecture,
it is enough to consider 4-connected plane triangulations.
\end{abstract}

\section{Introduction}

Let $G$ be a graph on the vertices $[n]:= \{1,2,\ldots,n\}$
and edges $E$
without loops or multiple edges.
Let $S \subset [n]$.
Then the cut semimetric on $G$ induced by $S$ is the $0/1$ vector
$\delta_G(S)$ in $\RR^E$ defined by 
$$
\delta_G(S)_{ij} =
\left\{
\begin{array}{cc}
 1  & \mbox{ if } | S \cap \{i,j\}|=1\\
 0 &  \mbox{ otherwise}
\end{array}
\right.
$$
where $ij \in E$.
Let 
$A_G= \{\a_1,\ldots,\a_N\} = \{\delta_G (S) \ | \ S \subset [n]\} \subset \ZZ^E$
where $N=2^{n-1}$.
The {\em cut polytope} $\cpoly$ of $G$ is the convex hull of $A_G$.
Let
\begin{eqnarray*}
X_G &:=&
\left\{
\left(
\begin{array}{c}
\a_1\\
1
\end{array}
\right)
,\ldots,
\left(
\begin{array}{c}
\a_N\\
1
\end{array}
\right)
\right\} \subset \ZZ^{E+1}, \\
\ZZ (X_G) &:=& \left\{ \left. \sum_{i=1}^N z_i 
\left(
\begin{array}{c}
\a_i\\
1
\end{array}
\right)
 \ \right| \   z_i \in \ZZ \right\} \subset \ZZ^{E+1},\\
\QQ_+ (X_G) &:=& \left\{ \left. \sum_{i=1}^N q_i 
\left(
\begin{array}{c}
\a_i\\
1
\end{array}
\right)
 \ \right| \ 0 \leq q_i \in \QQ \right\} \subset \QQ^{E+1},\\
\ZZ_+ (X_G) &:=& \left\{ \left. \sum_{i=1}^N z_i 
\left(
\begin{array}{c}
\a_i\\
1
\end{array}
\right)
 \ \right| \ 0 \leq z_i \in \ZZ \right\} \subset \ZZ^{E+1}.
\end{eqnarray*}
Then $\ZZ_+ (X_G) \subset \ZZ (X_G) \cap \QQ_+ (X_G)$ holds in general.
The cut polytope $\cpoly$ is called {\em normal}
if we have $\ZZ_+ (X_G) = \ZZ (X_G) \cap \QQ_+ (X_G)$.

\subsection{A conjecture on normal cut polytopes}

Let
$K[{\bf t},s]=K[t_1,\ldots,t_E,s]$ be
the polynomial ring in $E+1$ variables over a field $K$
and let
$K[{\bf q}]=K[q_1,\ldots,q_N]$
the polynomial ring in $N (= 2^{n-1})$ variables over $K$.
For each nonnegative integer vector ${\bf \alpha} =(\alpha_1,\ldots,\alpha_E)
\in \ZZ^E$, we set
${\bf t}^\alpha = t_1^{\alpha_1} \cdots t_E^{\alpha_E} $.
Then the {\em toric cut ideal} $I_G$ of a graph $G$ is 
the kernel of homomorphism
$
\pi : K[{\bf q}] \longrightarrow  K[{\bf t},s]
$
defined by $\pi(q_i) = {\bf t}^{{\bf a}_i} s$.
Sturmfels--Sullivant \cite[Conjecture 3.7]{StSu} conjectured 
that $K[{\bf q}]/ I_G$ is normal if and only if $G$ has no $K_5$ minor.
Since it is known (e.g., \cite[Proposition 13.5]{Stu}) that 
$K[{\bf q}]/ I_G$ is normal if and only if 
$\ZZ_+ (X_G) = \ZZ (X_G) \cap \QQ_+ (X_G)$ holds,
their conjecture is formulated as follows:

\begin{Conjecture}
\label{concon}
{\em
The cut polytope $\cpoly$ is normal if and only if
$G$ has no $K_5$ minor.
}
\end{Conjecture}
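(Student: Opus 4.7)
The plan is to prove the two directions of the equivalence separately. For necessity, I would establish the contrapositive: if $G$ has a $K_5$ minor, then $\cpoly$ is non-normal. This reduces to two sub-tasks. The first is to verify by a finite computation that $\cpolytope(K_5)$ is itself not normal, by exhibiting an explicit element of $\ZZ(X_{K_5}) \cap \QQ_+(X_{K_5}) \setminus \ZZ_+(X_{K_5})$. The second is to prove that the class of graphs with normal cut polytope is closed under taking minors, which is precisely the main theorem of the present paper; combined with the $K_5$ verification, this immediately yields necessity.

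For sufficiency I would invoke Wagner's structure theorem: every $K_5$-minor-free graph is obtained from planar graphs and the Wagner graph $V_8$ by repeated clique-sums along cliques of size at most three. Accordingly, I would split the argument into three steps. Step (a): prove normality of $\cpoly$ for every planar $G$, which I would attempt via a unimodular triangulation of $\cpoly$ (equivalently, a squarefree initial ideal of the toric cut ideal $I_G$), exploiting Barahona's cycle-inequality description of the planar cut polytope. The minor-closedness result from this paper further reduces (a) to $4$-connected plane triangulations, as the abstract announces. Step (b): verify normality of $\cpolytope(V_8)$ directly, which is a finite computation. Step (c): prove that normality of cut polytopes is preserved under clique-sums along cliques of size at most three.

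The main obstacle will be step (c). A cut of a clique-sum $G = G_1 \cup_K G_2$ is exactly a pair of cuts of $G_1$ and $G_2$ that agree on the shared clique $K$ (up to complementing one side). Lifting this correspondence at the level of $\ZZ_+$, however, forces one to produce, from a given lattice point in $\ZZ(X_G) \cap \QQ_+(X_G)$, simultaneous integer decompositions on $G_1$ and on $G_2$ whose restrictions to the cuts of $K$ match term by term, and it is not clear a priori that such compatible integer liftings exist.  Step (a) is the next significant obstacle: even with the cycle-inequality presentation available, constructing a unimodular triangulation by induction on $4$-connected plane triangulations — for instance by locally modifying a triangulation along a contractible edge or across a separating triangle and controlling the effect on the simplicial decomposition of the cut polytope — will require a careful combinatorial argument. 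Step (b) and the necessity direction are, by contrast, essentially finite verifications once the minor-closedness theorem of the present paper is in hand.
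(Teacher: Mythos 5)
The statement you are addressing is a \emph{conjecture}, and the paper does not prove it; it remains open. Your outline correctly reproduces the reduction that the paper actually carries out: necessity follows from the non-normality of $\cpolytope(K_5)$ together with minor-closedness (Corollary \ref{mcmc}); the clique-sum step you label (c) is exactly Theorem \ref{glue}, where the ``compatible integer liftings'' worry you raise is resolved by a counting argument showing that the multiplicities $\xi_i$ of each restriction pattern on the shared clique are determined by the coordinates $p,q,r,\alpha$ and hence agree on both sides, so the two decompositions can be paired term by term; and step (b) for $V_8$ is the same finite \texttt{Normaliz} computation the paper reports. So far your plan and the paper coincide.

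The genuine gap is step (a). Proving normality of $\cpoly$ for all planar $G$ --- equivalently, by minor-closedness, for all $4$-connected plane triangulations --- is precisely the open content of the conjecture after the paper's reductions; the paper states it as Conjecture 4.2 and offers no proof. Your proposed route (a squarefree initial ideal / unimodular triangulation built by induction on $4$-connected plane triangulations, using Barahona's cycle-inequality description) is a research programme, not an argument: no such triangulation is constructed, no inductive mechanism is specified that controls the simplicial decomposition under edge contraction or across separating triangles, and it is not even known that these cut polytopes admit unimodular triangulations (a strictly stronger property than normality). Until step (a) is supplied, the proposal establishes only what the paper itself establishes --- the ``only if'' direction and the reduction of the ``if'' direction to $4$-connected plane triangulations --- and does not prove the statement.
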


If $\cpoly$ is normal
and $G'$ is obtained from $G$ by contracting an edge,
then $\cpolytope (G')$ is normal (\cite[Lemma 3.2 (2)]{StSu}).
Note that, if a graph $G$ has $K_m$ as a minor,
then that minor can be realized by a sequence of edge contraction only.
As stated in \cite{StSu}, the ``only if" part is true since $\cpolytope (K_5)$ is not normal.
On the other hand, the ``if" part is true for the following classes
of graphs:

\begin{itemize}
\item
graphs with $\leq 6$ vertices (by a direct computation \cite{StSu}
together with \cite[Theorem 1.2]{StSu})
\item
graphs having no induced cycle of length $\geq 5$
(by \cite[Theorem 3.2]{Sul})
\item
``ring graphs" (Note that ring graphs have no $K_4$ minor. See \cite{NaPe}).
\end{itemize}

\subsection{Hilbert bases of cut polytopes}

In order to avoid confusion,
we must introduce ``nonhomogeneous" version of this problem on cut polytopes.
The following sets are studied in, e.g., \cite{Fugo,Lau}:
\begin{eqnarray*}
\ZZ (A_G) &:=& \left\{ \left. \sum_{i=1}^N z_i 
\a_i
\ \right| \   z_i \in \ZZ \right\} \subset \ZZ^{E}\\
\QQ_+ (A_G) &:=& \left\{ \left. \sum_{i=1}^N q_i 
\a_i
\ \right| \ 0 \leq q_i \in \QQ \right\} \subset \QQ^{E}\\
\ZZ_+ (A_G) &:=& \left\{ \left. \sum_{i=1}^N z_i 
\a_i
\ \right| \ 0 \leq z_i \in \ZZ \right\} \subset \ZZ^{E}
\end{eqnarray*}
If $\ZZ_+ (A_G) = \ZZ (A_G) \cap \QQ_+ (A_G) $ holds,
then $A_G$ is called a {\em Hilbert basis}.
It is known that
$\ZZ_+ (A_G) = \ZZ (A_G) \cap \QQ_+ (A_G) $
holds if one of the following holds:
\begin{itemize}
\item
$G$ has no $K_5$ minor (\cite[Corollary 1.3]{Fugo});
\item
$G$ is $K_6 \setminus e$ or its subgraph (\cite[Theorem 1.1]{Lau}).
\end{itemize}
Moreover, $\ZZ_+ (A_G) \neq \ZZ (A_G) \cap \QQ_+ (A_G) $
holds if
\begin{itemize}
\item
$G$ has $K_6$ minor (\cite[Proposition 1.2]{Lau}).
\end{itemize}
On the other hand, it is known that the class of graphs $G$ satisfying
$\ZZ_+ (A_G) = \ZZ (A_G) \cap \QQ_+ (A_G) $
is closed under
\begin{itemize}
\item
contraction minors (\cite[Proposition 2.1]{Lau});
\item
clique sums (\cite[Proposition 2.7]{Lau});
\item
edge deletions satisfying some conditions (\cite[Proposition 2.3]{Lau}).
\end{itemize}

Hence it is natural to have the following conjecture.

\begin{Conjecture}
Let $G$ be a connected graph.
Then $
\ZZ_+ (A_G )
= 
\ZZ (A_G) \cap \QQ_+ (A_G) 
$
if and only if $G$ has no $K_6$ minor.
\end{Conjecture}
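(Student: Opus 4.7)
The ``only if'' direction is immediate from Laurent's Proposition 1.2, which already asserts that the presence of a $K_6$ minor forces $\ZZ_+(A_G) \neq \ZZ(A_G) \cap \QQ_+(A_G)$; so the entire substance of the conjecture lies in the ``if'' direction.

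For the ``if'' direction I would proceed by structural induction on the class of $K_6$-minor-free graphs, leveraging all three closure properties cited above: contraction minors (Lau, Prop.\,2.1), clique sums (Lau, Prop.\,2.7), and conditional edge deletions (Lau, Prop.\,2.3). First one reduces to 2-connected graphs, and then uses the clique-sum closure to reduce further to graphs that are ``prime'' in the sense of admitting no nontrivial decomposition along a clique of size $\leq 4$. At this point one wants a Robertson--Seymour-style structural description of prime $K_6$-minor-free graphs. The conjecture of J{\o}rgensen, that every 6-connected $K_6$-minor-free graph is apex, would be the ideal input: combined with the three closure theorems it reduces the problem to two base cases, namely graphs with no $K_5$ minor (handled by Fugo's Corollary 1.3) and apex graphs, i.e., graphs possessing a vertex $v$ for which $G-v$ is planar.

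The genuinely new step is therefore the apex case. Here the plan is to use $v$ to project to the planar residue $G-v$. Every cut $\delta_G(S)$ of $G$ records, on the edges incident to $v$, which neighbors of $v$ lie on the opposite side of the bipartition, and on the remaining edges agrees with $\delta_{G-v}(S \setminus \{v\})$. Given $\Bx \in \ZZ(A_G) \cap \QQ_+(A_G)$, I would peel off an explicit nonnegative integer combination of cuts that matches $\Bx$ on the edges at $v$ and leaves a remainder in $\ZZ(A_{G-v}) \cap \QQ_+(A_{G-v})$, to which Fugo's theorem then applies. The conditional edge-deletion closure should supply the compatibility needed to glue the two pieces.

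The main obstacles are twofold. First, J{\o}rgensen's conjecture is known only asymptotically, so the structural reduction above is contingent on an unresolved combinatorial hypothesis. Second, even assuming the structure theorem, the apex case is not a formal consequence of any closure operation currently available: cuts of $G$ do not factor as cuts of $G-v$ paired with local data at $v$ in a way that automatically respects Hilbert decompositions, and lifting a decomposition across the apex vertex requires a genuinely new argument inside the cut polytope. Either adding an ``apex extension'' step to Lau's list of closure operations, or producing a direct combinatorial certificate on apex graphs, is in my view the main technical barrier to completing the proof.
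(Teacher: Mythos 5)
The statement you are trying to prove is stated in the paper as a \emph{Conjecture}, and the paper supplies no proof of it (nor claims one); it only records the known ``only if'' direction via Laurent's Proposition~1.2 and the closure properties you cite. So there is no argument in the paper to compare yours against, and your proposal, by its own admission, is not a proof either: it is a research plan with two explicitly unresolved steps. Concretely, (1) your structural reduction rests on J{\o}rgensen's conjecture (or some substitute structure theorem for $K_6$-minor-free graphs), which is open in general and known only for sufficiently large graphs, and (2) the apex base case is, as you say yourself, ``not a formal consequence of any closure operation currently available'' --- the peeling argument at the apex vertex is precisely the missing mathematics, not a routine verification. A further technical worry you do not address: your reduction to graphs that are prime with respect to clique cutsets of size $\leq 4$ requires the Hilbert-basis property to be closed under $3$-sums (cliques of size $4$), and you should check that Laurent's Proposition~2.7 actually covers sums along cliques that large; the analogous normality result in this paper (Theorem~3.2) is proved only for $0$-, $1$- and $2$-sums, which is a warning sign.

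None of this means the outline is worthless --- the decomposition strategy and the identification of the apex case as the crux are reasonable --- but as it stands the proposal establishes nothing beyond what the paper already records as known, and the statement remains a conjecture.
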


The relation between our problem and this problem is as follows:

\begin{Proposition}
\label{nonhomo}
If
$
\ZZ_+ (X_G) = \ZZ (X_G) \cap \QQ_+ (X_G) 
$
holds, then we have
$
\ZZ_+ (A_G) = \ZZ (A_G) \cap \QQ_+ (A_G) 
$.
\end{Proposition}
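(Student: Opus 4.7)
The proof proposal is short: reduce the nonhomogeneous statement for $A_G$ to the homogeneous statement for $X_G$ by inserting a carefully chosen last coordinate, exploiting the fact that $\0b = \delta_G(\emptyset) \in A_G$.

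The plan is as follows. Let $\bb \in \ZZ(A_G) \cap \QQ_+(A_G)$. I aim to produce $M \in \ZZ_{\geq 0}$ such that $(\bb, M) \in \ZZ(X_G) \cap \QQ_+(X_G)$; then by hypothesis $(\bb, M) \in \ZZ_+(X_G)$, and projecting onto the first $E$ coordinates immediately yields $\bb \in \ZZ_+(A_G)$, as desired.

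The construction of $M$ uses the observation that $\delta_G(\emptyset) = \0b$ lies in $A_G$; say $\a_{i_0} = \0b$, so that $(\0b, 1) \in X_G$. First, fix an expression $\bb = \sum q_i \a_i$ with $q_i \in \QQ_{\geq 0}$ and set $s := \sum_i q_i \in \QQ_{\geq 0}$. Next, fix any expression $\bb = \sum z_i \a_i$ with $z_i \in \ZZ$. Since $\a_{i_0} = \0b$, replacing $z_{i_0}$ by $z_{i_0} + c$ for any $c \in \ZZ$ does not change $\bb$ but shifts $\sum_i z_i$ by $c$. Hence for any integer $M$ we can rewrite $\bb = \sum z_i' \a_i$ with $z_i' \in \ZZ$ and $\sum_i z_i' = M$. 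Choose any integer $M \geq s$. Then
\[
(\bb, M) = \sum_i z_i' \binom{\a_i}{1} \in \ZZ(X_G),
\]
and on the rational side we can balance the last coordinate by inserting copies of $(\0b, 1)$:
\[
(\bb, M) = \sum_i q_i \binom{\a_i}{1} + (M - s)\binom{\a_{i_0}}{1} \in \QQ_+(X_G),
\]
since $M - s \geq 0$ and the first $E$ coordinates are unaffected because $\a_{i_0} = \0b$.

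Thus $(\bb, M) \in \ZZ(X_G) \cap \QQ_+(X_G)$, and the hypothesis gives $(\bb, M) = \sum w_i (\a_i, 1)^\top$ with $w_i \in \ZZ_{\geq 0}$. Reading off the first $E$ coordinates yields $\bb = \sum w_i \a_i \in \ZZ_+(A_G)$, which completes the argument. There is no real obstacle here; the only subtlety is ensuring that the integer representative and the nonnegative rational representative can be forced to have the same last-coordinate sum, which is precisely what the zero cut vector $\delta_G(\emptyset)$ lets us arrange.
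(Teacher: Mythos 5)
Your argument is correct and is essentially the paper's own proof: the paper also picks out the zero cut vector (giving $(0,\ldots,0,1)\in\ZZ(X_G)$) to adjust the last coordinate so that $(\xb,\alpha)$ lands in $\ZZ(X_G)\cap\QQ_+(X_G)=\ZZ_+(X_G)$, then projects back. You have merely spelled out the existence of the suitable $\alpha$ (your $M\ge s$) in more detail than the paper does.
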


\begin{proof}
Suppose that $
\ZZ_+ (X_G) = \ZZ (X_G) \cap \QQ_+ (X_G) 
$
holds.
Let $ \xb \in \ZZ (A_G) \cap \QQ_+ (A_G)$.
Since $(0,\ldots,0,1) \in \ZZ (X_G) $, there exists an integer $\alpha$
such that 
$$\left(\begin{array}{c} \xb \\ \alpha \end{array}\right) \in \ZZ (X_G) \cap \QQ_+ (X_G) =\ZZ_+ (X_G) .$$
Thus $ \xb \in \ZZ_+ (A_G)$ as desired.
\end{proof}

\begin{Remark}
\label{k5isce}
{\em
The graph $K_5$ is a counterexample of the converse of Proposition \ref{nonhomo}.
}
\end{Remark}

\subsection{Main results}

The main purpose of the present paper is to prove that the set of graphs $G$
such that $\cpoly$ is normal is minor closed (Corollary \ref{mcmc}). 
Thanks to Corollary \ref{mcmc}, we have large classes of normal cut polytopes
(Theorem \ref{suspension}, Corollary \ref{coro} and Theorem \ref{just}).
In addition, in Section 4, we will show that, in order to study Conjecture 1.1,
it is enough to consider 4-connected plane triangulations.

Since the converse of Proposition \ref{nonhomo} is not true in general (Remark \ref{k5isce}),
we cannot apply the results on Hilbert bases to our problem {\em directly}.
However there are a lot of useful idea in \cite{Lau}.
For example, the idea of the proof of Theorem \ref{main} comes from that of
\cite[Proposition 2.3]{Lau} and 
the proof of Theorem \ref{glue} is similar to that of \cite[Proposition 2.7]{Lau}.

\section{Deletion of an edge}

Since the origin belongs to $A_G$, 
we have $(0,\ldots,0,1) \in  X_G$.
Hence it follows from \cite[p.258]{Lau} that, for $\xb \in \ZZ^E$ and $\alpha \in \ZZ$,
\begin{eqnarray}
\left(
\begin{array}{c}
\xb\\
\alpha
\end{array}
\right)
 \in \ZZ (X_G) \ \Longleftrightarrow \  \sum_{e \in C} x_e  \equiv 0 \ (\mbox{mod } 2)
\end{eqnarray}
for each cycle $C$ of $G$.
From now on, we always assume that $G$ has no $K_5$ minor.
Then the following Proposition is known.

\begin{Proposition}[\cite{BaMa}]
\label{barahona}
Let $G$ be a graph without $K_5$ minor.
Then $\cpoly $ is the solution set of the following linear inequalities:
$$
0 \leq x_e \leq 1, \ e \in E
$$
$$
\sum_{e \in F} x_e - \sum_{e \in C \setminus F} x_e \leq |F| -1
$$
where $C$ ranges over the induced cycles of $G$ and $F$ ranges over the
odd subsets of $C$.
\end{Proposition}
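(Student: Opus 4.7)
The plan is to establish the two directions of the polyhedral description separately. For validity, every vertex $\delta_G(S)$ of $\cpoly$ is a $0/1$-vector, so the box constraints $0 \le x_e \le 1$ are immediate. For the odd-cycle inequality attached to an induced cycle $C$ and odd subset $F \subseteq C$, I would invoke the classical fact that $|C \cap \delta_G(S)|$ is even for every cut. Setting $a := |F \cap \delta_G(S)|$ and $b := |(C \setminus F) \cap \delta_G(S)|$, the integer $a+b$ is even while $|F|$ is odd, which rules out the extreme case $a = |F|$, $b = 0$; hence $a - b \le |F| - 1$, which is precisely the stated inequality evaluated at $\delta_G(S)$.

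For sufficiency (the hard direction), the plan is to exploit Wagner's structure theorem: every graph with no $K_5$ minor can be built by clique sums of order at most $3$ from planar graphs and the Wagner graph $V_8$. I would then proceed in two stages. The first stage is a gluing principle: the property of having the listed inequalities determine $\cpoly$ is preserved under clique sums along complete subgraphs. Concretely, if $G$ is glued from $G_1$ and $G_2$ along a shared clique $Q$, then any $\xb$ satisfying the inequalities on $G$ restricts to points $\xb_1, \xb_2$ satisfying the corresponding inequalities on $G_1$ and $G_2$, and convex decompositions of $\xb_1, \xb_2$ into cuts can be combined into a convex decomposition of $\xb$ into cuts of $G$ using the fact that cuts of $G$ are exactly pairs of cuts of $G_1$ and $G_2$ agreeing on $Q$. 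The second stage is to handle the base cases. For planar $G$, the natural tool is planar duality: cuts of $G$ correspond to disjoint unions of cycles (Eulerian subgraphs) in the planar dual $G^*$, so $\cpoly$ is linearly isomorphic to a face of the cycle-space polytope of $G^*$; the odd-cycle inequalities for induced cycles of $G$ translate to odd-cut ($T$-cut) inequalities on $G^*$, whose sufficiency follows from Edmonds' polyhedral description of the $T$-join polytope. The Wagner graph $V_8$ is finite and can be handled by a direct verification of the vertices of the polyhedron defined by the stated inequalities.

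The principal obstacle is the gluing step, because matching convex decompositions across the clique $Q$ requires taking two collections of cuts — one on each side — and pairing them so that their restrictions to $Q$ coincide while respecting the prescribed convex weights on each side. This is exactly where the restriction to \emph{induced} cycles in the statement becomes essential: a cycle of $G$ crossing $Q$ can be short-cut by chords within $Q$ into strictly shorter induced cycles lying inside $G_1$ or $G_2$, so the inequalities for induced cycles of the summands already suffice to control all odd-cycle inequalities of $G$ arising across the gluing.
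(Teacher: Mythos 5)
The paper does not prove this proposition at all --- it is imported from Barahona--Mahjoub \cite{BaMa} as a black box --- so there is no internal proof to compare yours against. Your outline is essentially the strategy of the original proof: validity from the evenness of $|C \cap \supp(\delta_G(S))|$ (your parity computation ruling out $a=|F|$, $b=0$ is correct), reduction from arbitrary cycles to induced cycles by splitting along chords, sufficiency via Wagner's decomposition, planar duality sending cuts of $G$ to even subgraphs of $G^*$ and odd-cycle inequalities to the Edmonds--Johnson description of the $T$-join polyhedron, and a finite check for $V_8$. The gluing step is also sound in principle, though you should make explicit why the two convex decompositions can be paired: for a shared clique $Q$ with at most three vertices, the weights of the (at most four) cut patterns on $Q$ are \emph{determined} by the edge values on $Q$ together with the total weight, so the marginals on the two sides automatically coincide --- this is the same linear-algebra step as in the proof of Theorem \ref{glue}, and it is exactly what breaks for clique sums of order $\geq 4$.

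The genuine gap is the descent from edge-maximal $K_5$-minor-free graphs to arbitrary ones. Wagner's theorem gives the clique-sum decomposition into maximal planar graphs and $V_8$ for \emph{edge-maximal} graphs without a $K_5$ minor; in the version for all $K_5$-minor-free graphs the clique-sum operation is allowed to delete edges of the shared clique. After deleting an edge of a shared triangle the graph need not decompose along cliques at all and need not be planar, so neither your gluing principle nor your base cases apply to it directly. You therefore need an additional lemma: if the trivial and odd-cycle inequalities determine $\cpoly$, then they determine $\cpolytope(G \setminus e_0)$. This is a Fourier--Motzkin elimination of the variable $x_{e_0}$, and the nontrivial point is that combining two odd-cycle inequalities through $e_0$ (one contributing an upper bound on $x_{e_0}$, one a lower bound) yields, after decomposing the symmetric difference of the two cycles and checking parities, a consequence of the odd-cycle inequalities of $G \setminus e_0$. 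Without this step your argument only covers graphs that are literally clique sums of planar graphs and $V_8$. It is worth noting that the paper's own Theorem \ref{main} runs precisely this kind of elimination (the quantities $x_{\max}$ and $x_{\min}$ are the two sides of the projection), but for normality rather than for the linear description, and it \emph{uses} the present proposition as input rather than proving it.
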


Thanks to Proposition \ref{barahona},
we have the following:

\begin{Corollary}
\label{qplus}
Let $G$ be a graph without $K_5$ minor.
For a vector $\xb \in \QQ^E$ and a nonnegative integer $\alpha$,
$\left( \begin{array}{c} \xb \\ \alpha \end{array} \right) \in \QQ_+ (X_G)$
if and only if
$$
0 \leq x_e \leq \alpha, \ e \in E
$$
$$
\sum_{e \in F} x_e - \sum_{e \in C \setminus F} x_e \leq  \alpha \ (|F| -1)
$$
where $C$ ranges over the induced cycles of $G$ and $F$ ranges over the
odd subsets of $C$.
\end{Corollary}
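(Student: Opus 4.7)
The plan is to derive Corollary \ref{qplus} as the standard homogenization of Proposition \ref{barahona}. The starting observation is that every generator of $X_G$ has last coordinate $1$, so the cone
$$
\QQ_+(X_G) = \left\{\alpha \left(\begin{array}{c} \yb \\ 1 \end{array}\right) \ \Big| \ \alpha \geq 0, \ \yb \in \cpoly\right\}
$$
consists of nonnegative scalings of points on the affine slice $\cpoly \times \{1\}$. Equivalently, for $\xb \in \QQ^E$ and $\alpha \geq 0$, I would show that $\left(\begin{array}{c} \xb \\ \alpha \end{array}\right) \in \QQ_+(X_G)$ if and only if $\xb \in \alpha \cdot \cpoly$, interpreting the right-hand side as $\{\0b\}$ when $\alpha = 0$.

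With this in hand, for the main case $\alpha > 0$ the plan is simply to write $\left(\begin{array}{c} \xb \\ \alpha \end{array}\right) = \alpha \left(\begin{array}{c} \xb/\alpha \\ 1 \end{array}\right)$, so that membership in $\QQ_+(X_G)$ is equivalent to $\xb/\alpha \in \cpoly$. Feeding $\xb/\alpha$ into the inequality description of Proposition \ref{barahona} and then clearing denominators by multiplying each inequality through by $\alpha$ produces exactly the inequalities in the statement.

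The degenerate case $\alpha = 0$ I would handle separately. From the stated system, $0 \leq x_e \leq 0$ forces $\xb = \0b$, and $\left(\begin{array}{c} \0b \\ 0 \end{array}\right) \in \QQ_+(X_G)$ is trivial. Conversely, if $\left(\begin{array}{c} \xb \\ 0 \end{array}\right) = \sum q_i \left(\begin{array}{c} \a_i \\ 1 \end{array}\right)$ with $q_i \geq 0$, then comparing last coordinates gives $\sum q_i = 0$, hence every $q_i = 0$ and $\xb = \0b$. Nothing in this outline looks delicate: all the substance is packed into Proposition \ref{barahona}, and the corollary is essentially a change of variables, so I do not anticipate any genuine obstacle.
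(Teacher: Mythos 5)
Your proposal is correct and matches the paper's own proof: the paper likewise observes that for $\alpha > 0$ one has $\frac{1}{\alpha}\xb \in \cpoly$ if and only if $\left(\begin{array}{c}\xb\\ \alpha\end{array}\right) \in \QQ_+(X_G)$, and then reads off the inequalities from Proposition \ref{barahona} after clearing denominators. Your separate treatment of the degenerate case $\alpha = 0$ is a small extra bit of care the paper leaves implicit, but it is the same argument in substance.
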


\begin{proof}
It follows from the following fact:
$$
\frac{1}{\ \alpha\ } \xb \in \cpoly 
\ \ 
\Longleftrightarrow 
\ \ 
\left(
\begin{array}{c}
\xb\\
\alpha
\end{array}
\right)
\in \QQ_+ (X_G)
$$
for $0 < \alpha \in \ZZ$ and $\xb \in \QQ^E$.
\end{proof}

By using the equation (1) together with Corollary \ref{qplus},
we have the following.

\begin{Theorem}
\label{main}
Let $G$ be a graph.
If $\cpoly$ is normal, then $\cpolytope (G \setminus e_0)$
is normal for any edge $e_0$ of $G$.
\end{Theorem}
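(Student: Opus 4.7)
The plan is to show $(\yb,\alpha)\in \ZZ_+(X_{G'})$ for every $(\yb,\alpha)\in \ZZ(X_{G'})\cap \QQ_+(X_{G'})$, where $G':=G\setminus e_0$, by lifting to $G$, invoking normality of $\cpoly$, and projecting back. Concretely, I will exhibit an integer $\beta$ such that the extension $\tilde\yb:=(\yb,\beta)\in \ZZ^E$ satisfies $(\tilde\yb,\alpha)\in \ZZ(X_G)\cap \QQ_+(X_G)$. Normality of $\cpoly$ then furnishes a representation $(\tilde\yb,\alpha)=\sum_i z_i\binom{\delta_G(S_i)}{1}$ with $z_i\in \ZZ_{\ge 0}$, and dropping the $e_0$-coordinate yields $(\yb,\alpha)=\sum_i z_i\binom{\delta_{G'}(S_i)}{1}\in \ZZ_+(X_{G'})$, since the restriction $\delta_G(S)|_{E'}$ equals $\delta_{G'}(S)$. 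Note that $\cpoly$ normal forces $G$ (hence also $G'$) to have no $K_5$ minor, so Corollary \ref{qplus} is applicable to both.

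To pin down $\beta$, I split the conditions into parity constraints (for $\ZZ(X_G)$) and inequality constraints (for $\QQ_+(X_G)$). By criterion (1), cycles of $G$ avoiding $e_0$ impose conditions already ensured by $(\yb,\alpha)\in \ZZ(X_{G'})$, while cycles through $e_0$ demand $\beta\equiv p\pmod 2$, where $p:=\sum_{e\in C\setminus e_0} y_e\bmod 2$. This $p$ is well defined because the symmetric difference of any two cycles through $e_0$ is an edge-disjoint union of cycles of $G'$, each with even $\yb$-sum by hypothesis. By Corollary \ref{qplus}, the inequality constraints not involving $\beta$ follow from $(\yb,\alpha)\in \QQ_+(X_{G'})$, and those that do involve $\beta$ --- coming from induced cycles $C$ of $G$ through $e_0$ together with $0\le\beta\le\alpha$ --- confine $\beta$ to an integer interval $[L,U]$. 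This interval is non-empty: any representation $(\yb,\alpha)=\sum_i q_i\binom{\delta_{G'}(S_i)}{1}$ with $q_i\in \QQ_{\ge 0}$ lifts to $\sum_i q_i\binom{\delta_G(S_i)}{1}\in \QQ_+(X_G)$, producing a rational witness $\beta^*\in [L,U]$.

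The main obstacle will be selecting an integer $\beta\in [L,U]$ with $\beta\equiv p\pmod 2$. The key observation is that every bound on $\beta$ coming from an induced cycle through $e_0$ is an integer congruent to $p$ modulo $2$: solving the cycle inequality of Corollary \ref{qplus} for $\beta$ yields an expression of the form $\pm\alpha(|F|-1)+\sum_{e\in C\setminus e_0}\varepsilon_e y_e$ with $\varepsilon_e\in\{-1,+1\}$, and since $|F|-1$ is even and signs are irrelevant modulo $2$, this reduces to $\sum_{e\in C\setminus e_0} y_e\equiv p\pmod 2$. Consequently, if some cycle-derived lower bound attains $L$ then $L\equiv p$ and $\beta:=L$ works; otherwise $L=0$. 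Symmetrically, if some cycle-derived upper bound attains $U$ then $\beta:=U$ works; otherwise $U=\alpha$. In the remaining case $[L,U]=[0,\alpha]$, any $\alpha\ge 1$ admits an integer of parity $p$, while the borderline $\alpha=0$ forces $\yb=\Bzero$ and hence $p=0$, so $\beta:=0$ succeeds. This parity-matching at the endpoints of $[L,U]$ is the technical heart of the argument; the lift-normality-project scheme then concludes the proof.
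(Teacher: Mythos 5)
Your proposal is correct and follows essentially the same route as the paper's proof: lift $(\yb,\alpha)$ by choosing the $e_0$-coordinate $\beta$, characterize membership in $\QQ_+(X_G)$ via Corollary \ref{qplus} as an integer interval (nonempty by lifting the rational representation), characterize membership in $\ZZ(X_G)$ as a parity condition via (1), and observe that every cycle-derived bound is automatically congruent to the required parity because $\alpha(|F|-1)$ is even and signs are irrelevant modulo $2$. Your endpoint case analysis is organized slightly differently from the paper's (which splits on whether the interval is a single point), but the two are equivalent.
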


\begin{proof}
The idea of the proof is obtained from that of \cite[Proposition 2.3]{Lau}.
Let $G' = G \setminus e_0$.
Note that $G$ and $G'$ have no $K_5$ minor.
Let $A_{G'} =\{\ab_1,\ldots,\ab_N\}$ and
$$
\left(
\begin{array}{c}
\xb\\
\alpha
\end{array}
\right)
=
\sum_{i=1}^N
q_i 
\left(
\begin{array}{c}
\ab_i\\
1\end{array}
\right)
\in 
\ZZ (X_{G'}) \cap \QQ_+ (X_{G'})
$$
where $0 < \alpha \in \ZZ$
and $0 \leq q_i \in \QQ$ for $1 \leq i \leq N$.
Since $\cpoly$ is normal, it is enough to show that
there exists a nonnegative integer $\gamma$ such that
$$
\left(
\begin{array}{c}
\gamma\\
\xb\\
\alpha
\end{array}
\right)
\in \ 
\ZZ (X_{G}) \cap \QQ_+ (X_{G}) \ = \ \ZZ_+ (X_G).
$$

Let 
$\xb' = 
\left(
\begin{array}{c}
\gamma\\
\xb
\end{array}
\right)
$
where
$\gamma \in \QQ$.
Thanks to Corollary \ref{qplus}, 
$
\left(
\begin{array}{c}
\xb'\\
\alpha
\end{array}
\right)
\in 
\QQ_+ (X_{G})
$
if and only if 
\begin{eqnarray}
 & & 0 \leq \gamma \leq \alpha\\
 & & \sum_{e \in F} x_e' - \sum_{e \in C \setminus F} x_e' \leq  \alpha \ (|F| -1)
\end{eqnarray}
where $C$ ranges over the induced cycles of $G$ with $e_0 \in C$ and $F$ ranges over the
odd subsets of $C$.
Then the equations (2) and (3) have a solution $\gamma$.
In fact, 
$$
\sum_{i=1}^N
q_i 
\left(
\begin{array}{c}
\delta_i\\
\ab_i\\
1\end{array}
\right)
=
\left(
\begin{array}{c}
\sum_{i=1}^N q_i \delta_i\\
\xb\\
\alpha
\end{array}
\right)
\in
\QQ_+ (X_G),
$$
where 
$A_G = 
\left\{
\left(
\begin{array}{c}
\delta_1\\
\ab_1
\end{array}
\right),
\ldots,
\left(
\begin{array}{c}
\delta_N\\
\ab_N
\end{array}
\right)
\right\}$.
Let
\begin{eqnarray*}
x_{\max} &=& \underset{(C, \ F) \ | \  e_0 \in C \setminus F}{\max}
\left( \sum_{e \in F} x_e' - \sum_{e \in C \setminus F, \ e \neq e_0} x_e' - \alpha \ (|F| -1) \right)
\in \ZZ ,\\
x_{\min} &=& \underset{(C, \ F) \ | \ e_0 \in F}{\min}
\left( - \sum_{e \in F, \ e \neq e_0} x_e' + \sum_{e \in C \setminus F} x_e' + \alpha \ (|F| -1) \right)
\in \ZZ .
\end{eqnarray*}
Note that $|F|-1$ is even.
By (2) and (3) above, we have
\begin{eqnarray}
\left(
\begin{array}{c}
\gamma\\
\xb\\
\alpha
\end{array}
\right)
\in 
\QQ_+ (X_{G}) \ 
\Longleftrightarrow \ 
\max(0, x_{\max} ) \leq \gamma \leq \min( \alpha, x_{\min}).
\end{eqnarray}
On the other hand, let $C$ be an arbitrary cycle of $G$ containing $e_0$.
Then by (1),
\begin{eqnarray}
\left(
\begin{array}{c}
\gamma\\
\xb\\
\alpha
\end{array}
\right)
\in 
\ZZ (X_{G}) \ 
\Longleftrightarrow \ 
\gamma \equiv \sum_{e \in C , \ e \neq e_0} x_e' \ (\mbox{\rm mod } 2).
\end{eqnarray}
If $\max(0, x_{\max} ) < \min( \alpha, x_{\min})$, then
$\max(0, x_{\max} ) +1 \leq  \min( \alpha, x_{\min})$
and hence either $\gamma=\max(0, x_{\max} )$ or $\gamma=\max(0, x_{\max} )+1$ satisfies
the conditions (4) and (5).
Suppose that 
$\max(0, x_{\max} ) = \min( \alpha, x_{\min})$.
Let $\gamma = \max(0, x_{\max} ) = \min( \alpha, x_{\min}) \in \ZZ$.
Since $0 < \alpha$, at least one of $\gamma = x_{\max}$ or $\gamma = x_{\min}$ holds.
If $\gamma = x_{\max}$, then there exists a cycle $C$ of $G$ containing $e_0$ such that
\begin{eqnarray*}
\gamma &=&  \sum_{e \in F} x_e' - \sum_{e \in C \setminus F, \ e \neq e_0} x_e' - \alpha \ (|F| -1)\\
 & \equiv & \sum_{e \in C, \ e \neq e_0} x_e' \ \ (\mbox{mod } 2).
\end{eqnarray*}
Similarly, if $\gamma = x_{\min}$, then there exists a cycle $C$ of $G$ containing $e_0$ such that
\begin{eqnarray*}
\gamma &=&   - \sum_{e \in F, \ e \neq e_0} x_e' + \sum_{e \in C \setminus F} x_e' + \alpha \ (|F| -1)\\
& \equiv & \sum_{e \in C, \ e \neq e_0} x_e' \ \ (\mbox{mod } 2).
\end{eqnarray*}
In both cases, $\gamma$ satisfies the conditions (4) and (5).
Thus we have 
$$
\left(
\begin{array}{c}
\gamma\\
\xb\\
\alpha
\end{array}
\right)
\in 
\ZZ (X_{G}) \cap \QQ_+ (X_G) = \ZZ_+ (X_G)
$$
and hence
$
\left(
\begin{array}{c}
\xb\\
\alpha
\end{array}
\right)
\in 
\ZZ_+ (X_{G'})
$
as desired.
\end{proof}

It is known \cite[Lemma 3.2 (2)]{StSu} that, if $\cpoly$ is normal
and $G'$ is obtained from $G$ by contracting an edge,
then $\cpolytope (G')$ is normal.
Thus,
we have the following:

\begin{Corollary}
\label{mcmc}
The set of graphs $G$ such that $\cpoly$ is normal is minor closed.
\end{Corollary}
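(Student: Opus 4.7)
The plan is to observe that the statement is essentially a packaging of Theorem \ref{main} with the already-cited result \cite[Lemma 3.2 (2)]{StSu}, and that the only extra care needed is for the third standard minor operation (vertex deletion).

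Recall that $H$ is a minor of $G$ if $H$ can be obtained from $G$ by a finite sequence of the three elementary operations: edge deletion, edge contraction, and deletion of an isolated vertex. Theorem \ref{main} asserts that normality of $\cpoly$ is preserved under edge deletion, while \cite[Lemma 3.2 (2)]{StSu} asserts that it is preserved under edge contraction. So the proof reduces to handling isolated vertex deletion and then running an induction on the length of the minor sequence.

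For an isolated vertex $v$ of $G$, let $G'$ be the graph obtained by deleting $v$. Since $v$ is incident to no edges, the edge set of $G'$ coincides with $E$, and for any $S \subset [n]$ we have $\delta_G(S) = \delta_{G'}(S \setminus \{v\})$. Hence $A_G = A_{G'}$ as subsets of $\ZZ^E$, so $X_G = X_{G'}$ and normality transfers between $\cpoly$ and $\cpolytope(G')$ trivially. With this observation in place, the corollary follows by induction on the number of elementary operations used to realize the minor $H$ of $G$: at each step one of Theorem \ref{main}, \cite[Lemma 3.2 (2)]{StSu}, or the isolated-vertex remark applies, propagating normality.

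There is no real obstacle — all of the work has already been done in Theorem \ref{main}. The only thing that needs to be flagged explicitly is the (immediate) invariance of the cut polytope under removal of isolated vertices, so that the three standard minor operations are all covered.
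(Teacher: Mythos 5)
Your proof is correct and follows essentially the same route as the paper, which obtains the corollary directly by combining Theorem \ref{main} (edge deletion) with \cite[Lemma 3.2 (2)]{StSu} (edge contraction). Your explicit treatment of isolated-vertex deletion (noting $A_G = A_{G'}$ so the cut polytope is unchanged) is a small point the paper leaves implicit, but it does not change the argument.
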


\section{Clique sums and normality}

Let $G_1 = (V_1, E_1)$ and 
$G_2 = (V_2, E_2)$ be graphs such that $V_1 \cap V_2$ is a clique of both graphs.
The new graph $G = G_1 \sharp G_2$ with the vertex set $V = V_1 \cup V_2$
and edge set $E=E_1 \cup E_2$ is called the {\em clique sum} of $G_1$ and $G_2$
along $V_1 \cap V_2$.
If the cardinality of $V_1 \cap V_2$ is $k+1$, this operation is called a {\em $k$-sum} of the graphs.

\begin{Proposition}[\cite{StSu}]
Let $G = G_1 \sharp G_2$ be a $0$, $1$ or $2$-sum of $G_1$ and $G_2$.
Then the set of generators (or Gr\"obner bases) of the toric ideal $I_G$
of $\cpoly$ consists of
that of $I_{G_1}$ and $I_{G_2}$ together with some quadratic binomials.
\end{Proposition}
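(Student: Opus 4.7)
The plan is to exploit the decomposition of the edge set $E = E_1 \cup E_2$ together with the fact that the shared clique $W := V_1 \cap V_2$ has at most three vertices. For a subset $S \subseteq V$ put $S_i = S \cap V_i$; then $\delta_G(S)$ is the concatenation of $\delta_{G_1}(S_1)$ on $E_1$ and $\delta_{G_2}(S_2)$ on $E_2$, and conversely any pair $(S_1, S_2)$ with $S_1 \cap W = S_2 \cap W$ arises from the cut $S = S_1 \cup S_2$ of $G$. Up to the identification $S \sim V \setminus S$, this parametrization is bijective, and the toric map factors as $\pi_G(q_S) \cdot s = \pi_{G_1}(q_{S_1}) \cdot \pi_{G_2}(q_{S_2})$.

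Using this bijection, I would first lift the generators of $I_{G_i}$ to $I_G$. Given a binomial ${\bf q}^u - {\bf q}^v \in I_{G_1}$ involving variables $q_{T^{(1)}},\ldots,q_{T^{(r)}}$ with $T^{(j)} \subseteq V_1$, pick any $R \subseteq V_2 \setminus W$ compatible with $T^{(j)} \cap W$ on the clique $W$, and replace each $q_{T^{(j)}}$ by $q_{T^{(j)} \cup R}$. Since the $V_2$-part of every resulting cut is the same on both sides, the lifted binomial lies in $I_G$. Collect these into $\tilde I_{G_1}$, and define $\tilde I_{G_2}$ symmetrically. Next, introduce the \emph{gluing binomials}: whenever cuts $S, S'$ of $G$ satisfy $S \cap W = S' \cap W$, the $V_2$-tail swap $(S_1,S_2),(S_1',S_2') \longmapsto (S_1,S_2'),(S_1',S_2)$ produces cuts $R, R'$ with $\delta_G(S) + \delta_G(S') = \delta_G(R) + \delta_G(R')$, yielding a quadratic binomial $q_S q_{S'} - q_R q_{R'} \in I_G$.

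The next step is to show that $\tilde I_{G_1}$, $\tilde I_{G_2}$, and the gluing binomials together generate $I_G$. Given any binomial $f = {\bf q}^u - {\bf q}^v \in I_G$, apply gluing binomials to reorganize the two monomials so that the multiset of $V_2$-parts appearing on each side is identical; once this is achieved, the remaining relation reduces to a sum of elements of $\tilde I_{G_1}$, and the symmetric argument handles the $V_1$-side. For the Gr\"obner basis claim, choose an elimination-style term order refining a partition into the three families, then verify that the relevant $S$-pairs reduce to zero, using that the supports of gluing binomials interact with the lifted generators in a controlled way.

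The main obstacle is the $2$-sum, where $W$ is an edge and compatibility between $S_1$ and $S_2$ along $W$ imposes a nontrivial parity condition on the cut of the shared edge. Here the tail swap must preserve compatibility at every step, and one must check that any two compatible pairs can be interchanged by a \emph{single} quadratic swap rather than a longer chain of moves. This is where the assumption $k \le 2$, equivalently $|W| \le 3$, is essential: for larger $k$, the clique $W$ can impose cut constraints that are not resolved by a single swap and would force higher-degree relations.
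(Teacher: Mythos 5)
The paper does not prove this Proposition at all: it is quoted from \cite{StSu}. Your outline does follow the route taken in that source (lift the generators of $I_{G_1}$ and $I_{G_2}$ by appending tails, add quadratic tail-swap binomials, and show these suffice), but as written it has a gap at the decisive point.

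The gap sits in the lifting step and resurfaces in the sorting step. When you lift ${\bf q}^u-{\bf q}^v\in I_{G_1}$ by appending one fixed $R\subset V_2\setminus W$ to every $T^{(j)}$, the $E_2$-part of the resulting cut is $\delta_{G_2}\bigl((T^{(j)}\cap W)\cup R\bigr)$, which depends on $j$ through the trace $T^{(j)}\cap W$; your assertion that ``the $V_2$-part of every resulting cut is the same on both sides'' is false variable-by-variable and only holds as an equality of multisets. What rescues it --- and what equally underlies your claim that the gluing quadrics can always reorganize a binomial of $I_G$ so that the multisets of $V_2$-parts on the two sides coincide --- is that for $|W|\le 3$ the cut vectors of the clique on $W$ are affinely independent (equivalently, $\cpolytope (K_{k+1})$ is a simplex for $k\le 2$), so the degree of a monomial together with its exponents on the edges inside $W$ already determines the multiset of $W$-classes of the cuts occurring in it. This is precisely the $\xi_i=\xi_i'$ counting in the paper's proof of Theorem \ref{glue}, and it is the exact place where the hypothesis $k\le 2$ enters; without stating it, neither the well-definedness of your lift nor the success of your swapping procedure is established, and your closing paragraph only gestures at it. Two smaller points: in a $2$-sum $W$ is a triangle, not an edge ($|W|=k+1$), and the complementation $S\sim V\setminus S$ must be normalized consistently (say $i_1\in S$, as in the paper) before your parametrization of cuts of $G$ by compatible pairs is actually a bijection. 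The Gr\"obner basis claim also needs more than ``the $S$-pairs reduce to zero''; one must exhibit a term order on $K[{\bf q}]$ restricting to the given orders on the two factors and sorting the tails, which is the substance of the quadratic statement in \cite{StSu}.
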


It turns out that this holds even for normality.

\begin{Theorem}
\label{glue}
Let $G = G_1 \sharp G_2$ be a $0$, $1$ or $2$-sum of $G_1$ and $G_2$.
Then the cut polytope of $G$ is normal if and only if
the cut polytope of $G_i$ is normal for $i = 1,2$.
\end{Theorem}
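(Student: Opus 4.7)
The plan is to prove the two directions separately. For the ``only if'' direction, I would observe that each summand $G_i$ is a minor of $G$: deleting the edges of $E_{3-i}\setminus E_i$ leaves the vertices of $V_{3-i}\setminus V_i$ isolated, so they can be removed to yield $G_i$. Thus normality of $\cpoly$ descends to $\cpolytope(G_i)$ via Corollary \ref{mcmc}. For the ``if'' direction, assuming both $\cpolytope(G_i)$ are normal, I would fix $\binom{\xb}{\alpha}\in \ZZ(X_G)\cap \QQ_+(X_G)$ with $\alpha>0$ (the case $\alpha=0$ forces $\xb=\mathbf{0}$) and aim to write it as a nonnegative integer combination of generators of $X_G$.

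The first step of the ``if'' direction is to restrict to each side. Setting $\xb^{(i)}=\xb|_{E_i}\in \ZZ^{E_i}$, I would argue that $\binom{\xb^{(i)}}{\alpha}\in \ZZ(X_{G_i})\cap \QQ_+(X_{G_i})$: the parity condition (1) is inherited because every cycle of $G_i$ is a cycle of $G$, and the rationality condition is inherited because restricting a convex decomposition $\xb/\alpha=\sum q_S\,\delta_G(S)$ to $E_i$ yields $\xb^{(i)}/\alpha=\sum q_S\,\delta_{G_i}(S\cap V_i)\in \cpolytope(G_i)$. Normality of $\cpolytope(G_i)$ then provides a presentation
$$\binom{\xb^{(i)}}{\alpha}=\sum_j z_j^{(i)}\binom{\delta_{G_i}(S_j^{(i)})}{1},\qquad z_j^{(i)}\in \ZZ_{\geq 0},\qquad \sum_j z_j^{(i)}=\alpha,$$
which I would record as a multiset $\MM_i$ of $\alpha$ subsets of $V_i$.

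The final and most delicate step is to match the multisets $\MM_1$ and $\MM_2$ and glue the paired cuts into cuts of $G$. In the $0$- and $1$-sum cases, $E_1\cap E_2=\emptyset$, so any bijection works: for each pair, replacing $S^{(2)}$ by its complement (which leaves its cut vector unchanged) aligns the values on the at most one common vertex, and $S=S^{(1)}\cup S^{(2)}\subseteq V_1\cup V_2$ then glues to a cut on $G$ restricting correctly on each side. In the $2$-sum case with common edge $uv$, the crucial observation is that evaluating the two decompositions at $uv$ forces
$$\#\{S\in \MM_i : |S\cap \{u,v\}|=1\}=x_{uv}\quad(i=1,2),$$
so one can biject the $x_{uv}$ cuts of $\MM_1$ cutting $uv$ with the $x_{uv}$ cuts of $\MM_2$ cutting $uv$, and pair the remaining $\alpha-x_{uv}$ cuts on each side. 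Within each matched pair $(S^{(1)},S^{(2)})$, complementing $S^{(2)}$ if necessary makes $S^{(1)}\cap\{u,v\}=S^{(2)}\cap\{u,v\}$, so $S=S^{(1)}\cup S^{(2)}$ glues to a cut of $G$ with $\delta_G(S)|_{E_i}=\delta_{G_i}(S^{(i)})$ on each side. Summing the $\alpha$ resulting cuts of $G$ recovers $\xb$, exhibiting $\binom{\xb}{\alpha}\in \ZZ_+(X_G)$. I expect the main obstacle to be precisely this $2$-sum matching/gluing; it succeeds exactly because $uv$ contributes a single coordinate to $\xb$, enforcing the shared count in the two separate $G_i$-decompositions.
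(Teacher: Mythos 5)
Your overall strategy---restrict $\binom{\xb}{\alpha}$ to each $X_{G_i}$, invoke normality of $\cpolytope (G_i)$ to obtain two multisets of $\alpha$ cuts, then match the multisets by counting how many cuts on each side induce each possible pattern on the shared clique, and glue matched pairs after complementation---is exactly the paper's argument (the paper derives the ``only if'' direction from the induced-subgraph lemma of Sturmfels--Sullivant rather than from Corollary \ref{mcmc}, but that difference is immaterial). However, there is a concrete gap: you have used the convention that a $k$-sum identifies $k$ vertices, whereas the paper defines a $k$-sum as identification along a clique of $k+1$ vertices. Hence the theorem's $2$-sum glues $G_1$ and $G_2$ along a \emph{triangle} $\{i_1,i_2,i_3\}$, not along a single edge; this triangle case is Case 1 of the paper's proof and is the only case actually needed for Theorem \ref{suspension}, where $\widehat{G}$ is assembled from $K_4$'s along triangles. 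Your cases cover only shared cliques of size at most $2$, so as written the proof does not establish the stated theorem.

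The triangle case is precisely where the matching needs more than your single-coordinate count. After normalizing every cut to contain $i_1$ (the paper's global version of your pair-by-pair complementation trick), a cut restricts to the triangle as one of four vectors ${\bf z}_0=(0,0,0)$, ${\bf z}_1=(0,1,1)$, ${\bf z}_2=(1,0,1)$, ${\bf z}_3=(1,1,0)$, and one must show that the multiplicities $\xi_0,\dots,\xi_3$ of these patterns in the $G_1$-decomposition coincide with the multiplicities $\xi_0',\dots,\xi_3'$ in the $G_2$-decomposition before a bijection can be chosen. This holds because the three shared coordinates $p,q,r$ of $\xb$ together with $\alpha$ determine the multiplicities uniquely (the four vectors $\binom{{\bf z}_i}{1}$ are linearly independent); the paper carries out exactly this computation via $p=\xi_2+\xi_3$, $q=\xi_1+\xi_3$, $r=\xi_1+\xi_2$, $\alpha=\sum_i\xi_i$. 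Your method extends to supply this step, but it must be added for the proof to be complete.
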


\begin{proof}
This is similar to the proof of \cite[Proposition 2.7]{Lau}.

Since $G_1$ and $G_2$ are induced subgraphs of $G$,
the ``only if" part follows from \cite[Lemma 3.2 (1)]{StSu}.

Suppose that the cut polytope of $G_i$ is normal for $i = 1,2$.
Let $\{i_1,\ldots,i_k\}$ $(1 \leq k \leq 3)$ denote the common vertices of
$G_1$ and $G_2$.
It is easy to see that
we can express $A_G$ as
\begin{eqnarray}
A_G= 
\{\delta_G (S) \ | \ 
i_1 \in S \subset [n]\} \subset \ZZ^E.
\end{eqnarray}

\noindent
{\bf Case 1.} $k =3$

\noindent
By (6), we have $A_G = A_G^{++} \cup A_G^{+-} \cup A_G^{-+} \cup A_G^{--}$ where
\begin{eqnarray*}
A_G^{++} &=& 
\left\{
\left.
\left(
\begin{array}{c}
\xb\\
\yb\\
{\bf z}_0
\end{array}
\right)
\ \right| \ 
\left(
\begin{array}{c}
\xb\\
{\bf z}_0
\end{array}
\right) \in A_{G_1}^{++}, \ 
\left(
\begin{array}{c}
\yb\\
{\bf z}_0
\end{array}
\right) \in A_{G_2}^{++}
\right\}, \ \ \ 
{\bf z}_0
=
\left(
\begin{array}{c}
0\\
0\\
0
\end{array}
\right)
\\
A_G^{+-} &=& 
\left\{
\left.
\left(
\begin{array}{c}
\xb\\
\yb\\
{\bf z}_1
\end{array}
\right)
\ \right| \ 
\left(
\begin{array}{c}
\xb\\
{\bf z}_1
\end{array}
\right) \in A_{G_1}^{+-}, \ 
\left(
\begin{array}{c}
\yb\\
{\bf z}_1
\end{array}
\right) \in A_{G_2}^{+-}
\right\}, \ \ \ 
{\bf z}_1
=
\left(
\begin{array}{c}
0\\
1\\
1
\end{array}
\right)
\\
A_G^{-+} &=& 
\left\{
\left.
\left(
\begin{array}{c}
\xb\\
\yb\\
{\bf z}_2
\end{array}
\right)
\ \right| \ 
\left(
\begin{array}{c}
\xb\\
{\bf z}_2
\end{array}
\right) \in A_{G_1}^{-+}, \ 
\left(
\begin{array}{c}
\yb\\
{\bf z}_2
\end{array}
\right) \in A_{G_2}^{-+}
\right\},\ \ \ 
{\bf z}_2
=
\left(
\begin{array}{c}
1\\
0\\
1
\end{array}
\right)\\
A_G^{--} &=& 
\left\{
\left.
\left(
\begin{array}{c}
\xb\\
\yb\\
{\bf z}_3
\end{array}
\right)
\ \right| \ 
\left(
\begin{array}{c}
\xb\\
{\bf z}_3
\end{array}
\right) \in A_{G_1}^{--}, \ 
\left(
\begin{array}{c}
\yb\\
{\bf z}_3
\end{array}
\right) \in A_{G_2}^{--}
\right\}, \ \ \ 
{\bf z}_3
=
\left(
\begin{array}{c}
1\\
1\\
0\\
\end{array}
\right)
\end{eqnarray*}
\begin{eqnarray*}
A_{G_i}^{++} &=&
\{\delta_{G_i} (S) \ | \ 
i_1,i_2,i_3 \in S \subset [n_i]\} \subset \ZZ^{E_i}\\
A_{G_i}^{+-} &=&
\{\delta_{G_i} (S) \ | \ 
i_1,i_2 \in S \subset [n_i], \ \  i_3 \notin S\} \subset \ZZ^{E_i}\\
A_{G_i}^{-+} &=&
\{\delta_{G_i} (S) \ | \ 
i_1,i_3 \in S \subset [n_i], \ \ i_2 \notin S\} \subset \ZZ^{E_i}\\
A_{G_i}^{--} &=&
\{\delta_{G_i} (S) \ | \ 
i_1 \in S \subset [n_i], \ \ i_2, i_3 \notin S\} \subset \ZZ^{E_i}.
\end{eqnarray*}
Let
$
{\small
\left(
\begin{array}{c}
\xb\\
\yb\\
p\\
q\\
r\\
\alpha
\end{array}
\right)
}
\in
\ZZ (X_G) \cap \QQ_+ (X_G)
$
for a positive integer $\alpha$.
Then we have
$$
{\small
\left(
\begin{array}{c}
\xb\\
p\\
q\\
r\\
\alpha
\end{array}
\right)
}
\in
\ZZ (X_{G_1}) \cap \QQ_+ (X_{G_1}) = \ZZ_+ (X_{G_1}),
\ 
{\small
\left(
\begin{array}{c}
\yb\\
p\\
q\\
r\\
\alpha
\end{array}
\right)
}
\in
\ZZ (X_{G_2}) \cap \QQ_+ (X_{G_2}) = \ZZ_+ (X_{G_2})
.$$
Hence
\begin{eqnarray}
{\small
\left(
\begin{array}{c}
\xb\\
p\\
q\\
r\\
\alpha
\end{array}
\right)
}
&=&
\left(
\begin{array}{c}
\xb^{(1)}\\
{\bf z}_{k_1}\\
1
\end{array}
\right)
+\left(
\begin{array}{c}
\xb^{(2)}\\
{\bf z}_{k_2}\\
1
\end{array}
\right)
+\cdots+
\left(
\begin{array}{c}
\xb^{(\alpha)}\\
{\bf z}_{k_\alpha}\\
1
\end{array}
\right)
\mbox{ where }
\left(
\begin{array}{c}
\xb^{(i)}\\
{\bf z}_{k_i}
\end{array}
\right)
 \in A_{G_1} \ \ \ 
\\
{\small
\left(
\begin{array}{c}
\yb\\
p\\
q\\
r\\
\alpha
\end{array}
\right)
}
&=&
\left(
\begin{array}{c}
\yb^{(1)}\\
{\bf z}_{k_1'}\\
1
\end{array}
\right)
+\left(
\begin{array}{c}
\yb^{(2)}\\
{\bf z}_{k_2'}\\
1
\end{array}
\right)
+\cdots+
\left(
\begin{array}{c}
\yb^{(\alpha)}\\
{\bf z}_{k_\alpha'}\\
1
\end{array}
\right)
\mbox{ where }
\left(
\begin{array}{c}
\yb^{(j)}\\
{\bf z}_{k_j'}
\end{array}
\right)
 \in A_{G_2}
.\ \ \ 
\end{eqnarray}
Let $\xi_i$ (resp. $\xi_i'$) denote 
the number of ${\bf z}_i$ appearing in (7) (resp.~(8))
for each $i = 0,1,2,3$.
Then we have
$p = \xi_2 + \xi_3 =\xi_2' + \xi_3'$,
$q = \xi_1 + \xi_3 =\xi_1' + \xi_3'$,
$r = \xi_1 + \xi_2 =\xi_1' + \xi_2'$,
and
$\alpha = \sum_{i=0}^4 \xi_i =\sum_{i=0}^4 \xi_i'$.
Hence $\xi_i = \xi_i'$ 
 for all $i = 0,1,2,3$.
Thus, by changing the numbering, we have
$$
{\small
\left(
\begin{array}{c}
\xb\\
\yb\\
p\\
q\\
r\\
\alpha
\end{array}
\right)
}
=
\left(
\begin{array}{c}
\xb^{(1)}\\
\yb^{(1)}\\
{\bf z}_{k_1}\\
1
\end{array}
\right)
+\left(
\begin{array}{c}
\xb^{(2)}\\
\yb^{(2)}\\
{\bf z}_{k_2}\\
1
\end{array}
\right)
+\cdots+
\left(
\begin{array}{c}
\xb^{(\alpha)}\\
\yb^{(\alpha)}\\
{\bf z}_{k_\alpha}\\
1
\end{array}
\right)
\in \ZZ_+ (X_{G}).
$$

\bigskip

\noindent
{\bf Case 2.} $k =1,2$

\noindent
By (6), if $k=1$, then
$A_G = 
\left\{
\left.
\left(
\begin{array}{c}
\xb\\
\yb
\end{array}
\right)
\ \right| \ 
\xb \in A_{G_1}, \ 
\yb \in A_{G_2}
\right\}
$
and if $k=2$, then
we have $A_G = A_G^+ \cup A_G^-$ where
\begin{eqnarray*}
A_G^+ &=& 
\left\{
\left.
\left(
\begin{array}{c}
\xb\\
\yb\\
0
\end{array}
\right)
\ \right| \ 
\left(
\begin{array}{c}
\xb\\
0
\end{array}
\right) \in A_{G_1}^+, \ 
\left(
\begin{array}{c}
\yb\\
0
\end{array}
\right) \in A_{G_2}^+
\right\}\\
A_G^- &=& 
\left\{
\left.
\left(
\begin{array}{c}
\xb\\
\yb\\
1
\end{array}
\right)
\ \right| \ 
\left(
\begin{array}{c}
\xb\\
1
\end{array}
\right) \in A_{G_1}^-, \ 
\left(
\begin{array}{c}
\yb\\
1
\end{array}
\right) \in A_{G_2}^-
\right\}\\
A_{G_i}^+ &=&
\{\delta_{G_i} (S) \ | \ 
i_1,i_2 \in S \subset [n_i]\} \subset \ZZ^{E_i}\\
A_{G_i}^- &=&
\{\delta_{G_i} (S) \ | \ 
i_1 \in S \subset [n_i], i_2 \notin S\} \subset \ZZ^{E_i}.
\end{eqnarray*}
In both cases, the desired conclusion follows from the similar
(and simpler) argument in Case 1.
\end{proof}

A graph $G = (V,E)$ is called {\em edge-maximal without ${\cal H}$ minor},
if $G$ has no ${\cal H}$ minor but any graph $G' = (V,E')$ with
$E' = E \cup \{e\}$ and $e \notin E$ has ${\cal H}$ minor.

Let $G$ be a graph with vertex set $V = [n] = \{1,\ldots,n\}$ and edge set $E$.
The {\em suspension} of the graph $G$ is the new graph $\widehat{G}$
whose vertex set equals $[n+1] = V \cup \{n+1\}$ and whose edge set
equals $E \cup \{\{ i,n+1\} \ | \ i \in V\} $.
A cut ideal $I_{\widehat{G}}$ corresponds to the toric ideal arising from
the binary graph model of $G$.

\begin{Theorem}
\label{suspension}
Let $G$ be a graph.
Then $\cpolytope (\widehat{G})$ is normal if and only if
$G$ has no $K_4$ minor.
\end{Theorem}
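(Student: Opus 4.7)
The two directions are handled separately. For the ``only if'' direction, suppose $G$ has a $K_4$ minor, realized by four pairwise adjacent connected branch sets $V_1,V_2,V_3,V_4\subset V(G)$. Then in $\widehat G$ the five branch sets $V_1,V_2,V_3,V_4,\{v\}$ (with $v$ the apex) are pairwise adjacent, because $v$ is joined to every vertex of $G$; hence $\widehat G$ has a $K_5$ minor. Since $\cpolytope(K_5)$ is not normal, Corollary \ref{mcmc} implies $\cpolytope(\widehat G)$ is not normal.

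For the ``if'' direction, the plan is to use the fact that a graph without $K_4$ minor is a partial $2$-tree: one may extend $G$ by adding edges (without creating a $K_4$ minor) to an edge-maximal $K_4$-minor-free graph on the same vertex set, which is a $2$-tree $G'$. Then $\widehat G$ is obtained from $\widehat{G'}$ by deleting exactly the edges of $G'\setminus G$ (the apex-edges of $\widehat{G}$ and $\widehat{G'}$ coincide since $V(G)=V(G')$), so by iterating Theorem \ref{main} it is enough to prove that $\cpolytope(\widehat{G'})$ is normal whenever $G'$ is a $2$-tree.

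This reduction is the substance of the proof and will be carried out by induction on $|V(G')|$. The base cases $G'=K_2$ and $G'=K_3$ give $\widehat{G'}=K_3$ and $\widehat{G'}=K_4$, both normal by the ``$\le 6$ vertices'' computation cited in the introduction. For the inductive step, decompose $G'=G'_0\sharp T$ as a $1$-sum of a smaller $2$-tree $G'_0$ with a triangle $T$ along an edge $\{a,b\}$; this is possible by the recursive definition of a $2$-tree. The crucial observation is that suspension transports the clique sum: with $v$ the shared apex,
\[
\widehat{G'} \;=\; \widehat{G'_0} \,\sharp\, \widehat T,
\]
where the intersection of the vertex sets is the triangle $\{a,b,v\}$ (indeed, $\{a,b\}$ is an edge of both factors, and $v$ is adjacent to $a$ and $b$ in both). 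This is a $2$-sum in the sense of the paper, and $\widehat T=K_4$; by induction $\cpolytope(\widehat{G'_0})$ is normal, and $\cpolytope(K_4)$ is normal, so Theorem \ref{glue} yields normality of $\cpolytope(\widehat{G'})$.

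The main thing to verify carefully is the commutation between suspension and clique sum, i.e.\ that an edge-clique sum in $G'$ becomes a triangle-clique sum in $\widehat{G'}$ (so that Theorem \ref{glue}, which covers $k\le 2$-sums, still applies); the rest is routine bookkeeping and the application of Theorem \ref{main} to pass from the extension $G'$ back to $G$.
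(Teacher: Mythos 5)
Your proposal is correct and follows essentially the same route as the paper: both directions rest on the observation that $\widehat{G}$ acquires a $K_5$ minor from a $K_4$ minor of $G$, and on the characterization of edge-maximal $K_4$-minor-free graphs as iterated edge-sums of triangles ($2$-trees), whose suspensions are $2$-sums of $K_4$'s handled by Theorem \ref{glue}, with Theorem \ref{main} used to pass back to subgraphs. You merely make explicit the induction and the commutation of suspension with clique sums, which the paper leaves implicit.
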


\begin{proof}
If $G$ has $K_4$ minor, then $\widehat{G}$ has $K_5$ minor.
Hence $\cpolytope (\widehat{G})$ is not normal.

It is known \cite[Proposition 7.3.1]{Die} that
a graph with at least three vertices is
edge-maximal without $K_4$ minor if and only if 
it is $1$ sum of $K_3$'s.
Hence, if $G$ is edge-maximal without $K_4$ minor,
then $\widehat{G}$ is $2$ sums of $K_4$'s.
Since the cut polytope of $K_4$ is normal,
$\cpolytope (\widehat{G})$ is normal by Theorem \ref{glue}.
Thus for any subgraph $G'$ of $G$, $\cpolytope (\widehat{G'})$ is 
normal by Theorem \ref{main}.
\end{proof}

\begin{Remark}
{\em
One of the referees pointed out that
Theorem \ref{suspension} implies the main result of \cite{Sul2}.
}
\end{Remark}

\begin{Example}
{\em
The cut polytope of a wheel graph $W_n = \widehat{C_n}$ is normal
since the cycle $C_n$ has no $K_4$ minor.
}
\end{Example}

By considering the subgraph of the graphs appearing in
Theorem \ref{suspension}, we have

\begin{Corollary}
\label{coro}
If $G$ has a vertex $v$ such that 
the induced subgraph of $G$ on $V \setminus \{v\}$
has no $K_4$ minor, then $\cpoly$ is normal.
\end{Corollary}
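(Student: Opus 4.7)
The plan is a direct reduction to the two main results already in hand: Theorem \ref{suspension} (which characterizes when the cut polytope of a suspension is normal) and Theorem \ref{main} (which says that deleting an edge preserves normality). The idea is to exhibit $G$ as a subgraph of the suspension of $H := G[V \setminus \{v\}]$.

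First I would set $H := G[V \setminus \{v\}]$, the induced subgraph on $V \setminus \{v\}$. By hypothesis $H$ has no $K_4$ minor, so Theorem \ref{suspension} applies and gives that $\cpolytope(\widehat{H})$ is normal. Next, I would identify the apex vertex of $\widehat{H}$ with the vertex $v$ of $G$, so that $V(\widehat{H}) = V(G)$. Under this identification, $\widehat{H}$ contains every edge of $H$ together with every possible edge of the form $\{u, v\}$ with $u \in V(H)$. Since the edges of $G$ consist precisely of the edges of $H$ together with a (possibly proper) subset of $\{\{u,v\} : u \in V(H)\}$, it follows that $G$ is a spanning subgraph of $\widehat{H}$, obtained by deleting those edges $\{u,v\}$ for which $u$ is not adjacent to $v$ in $G$.

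Finally, applying Theorem \ref{main} iteratively, one deletion at a time (or equivalently invoking Corollary \ref{mcmc} since $G$ is a minor of $\widehat{H}$), gives that $\cpoly$ is normal.

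There is essentially no substantive obstacle; the corollary is a direct packaging of the earlier results. The only point worth pausing over is the verification that identifying the apex of $\widehat{H}$ with $v$ really does display $G$ as a subgraph of $\widehat{H}$, but this is immediate from the definition of suspension, which by construction includes every possible edge from the apex to $V(H)$.
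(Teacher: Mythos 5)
Your proof is correct and matches the paper's intended argument exactly: the paper derives this corollary "by considering the subgraph of the graphs appearing in Theorem \ref{suspension}," which is precisely your reduction of $G$ to a spanning subgraph of $\widehat{H}$ with $H = G[V\setminus\{v\}]$, followed by Theorem \ref{main}. You have simply spelled out the details that the paper leaves implicit.
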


\begin{Example}
{\em
Let $G$ be a graph with $\leq 5$ vertices.
Then the cut polytope of $G$ is normal
if and only if $G \neq K_5$.
}
\end{Example}

\begin{Theorem}
\label{just}
Let $G$ be a graph with no $K_5 \setminus e$ minor.
Then $\cpoly$ is normal.
\end{Theorem}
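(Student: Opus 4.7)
The plan is to combine Theorem \ref{main}, Theorem \ref{glue}, and Corollary \ref{coro} through a structural reduction of $K_5 \setminus e$-minor-free graphs to apex graphs. The strategy is to decompose $G$ into $3$-connected pieces using cut vertices and $2$-separations, and then to show that each $3$-connected piece has an apex vertex whose removal leaves a $K_4$-minor-free graph.

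First, any graph $G$ decomposes, via its cut vertices and $2$-separations, into edges, cycles, and $3$-connected pieces, realised as an iterated sequence of $0$-sums and $1$-sums in the terminology of Section~3. Because the class of $K_5 \setminus e$-minor-free graphs is minor-closed, each piece inherits the hypothesis. Repeated application of Theorem \ref{glue} then reduces normality of $\cpoly$ to normality of the cut polytope of each piece; edges and cycles are ring graphs, hence normal by the result cited in the introduction, so it remains only to handle the $3$-connected pieces.

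For a $3$-connected $K_5 \setminus e$-minor-free graph $H$, I would show that $H$ has an apex vertex $v$, meaning a vertex with $H \setminus v$ having no $K_4$ minor; Corollary \ref{coro} then yields normality of $\cpolytope(H)$. Suppose, for contradiction, that $H \setminus v$ has a $K_4$ minor for every vertex $v$. Fix such a $v$ together with connected branch sets $X_1, X_2, X_3, X_4 \subseteq V(H) \setminus \{v\}$ realising the $K_4$ minor in $H \setminus v$. Using $3$-connectivity and Menger's theorem, one can extend the branch sets along vertex-disjoint paths from neighbours of $v$, absorbing vertices of $V(H) \setminus (\{v\} \cup X_1 \cup \cdots \cup X_4)$ as needed, so that $v$ has a neighbour in at least three of the enlarged $X_i$'s while the six $X_i$--$X_j$ adjacencies are preserved. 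Then $\{v\}, X_1, X_2, X_3, X_4$ form the branch sets of a $K_5 \setminus e$ minor of $H$, contradicting the hypothesis.

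The main obstacle is the Menger-style augmentation in the previous step: one must guarantee that the required vertex-disjoint paths exist and that attaching them to the $X_i$ preserves connectivity of each branch set and every $X_i$--$X_j$ adjacency. If this direct approach becomes entangled in case analysis, a cleaner alternative is to invoke a classical structural classification (in the spirit of Dirac's and Wagner's theorems) describing the $3$-connected blocks of $K_5 \setminus e$-minor-free graphs as $K_4$, wheels $W_n = \widehat{C_n}$ (which are normal by Theorem \ref{suspension}, since $C_n$ has no $K_4$ minor), or members of a finite list of small sporadic graphs whose normality is covered by the cited computation for graphs on at most $6$ vertices.
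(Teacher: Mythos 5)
Your fallback option is, in essence, the paper's actual proof: the paper cites the classification (Diestel, \emph{Graph decompositions}, p.~180) of edge-maximal graphs without $K_5\setminus e$ minor as $1$-sums of $K_3$, $K_{3,3}$, $W_n$ and the prism $C_3\times K_2$, checks each piece is normal, and concludes by Theorem \ref{glue} followed by Theorem \ref{main} for arbitrary subgraphs. So if you invoke that classification you are done, and more cleanly than by decomposing $G$ itself: passing to an edge-maximal supergraph first avoids your $2$-separation issue entirely, since a $2$-separator of $G$ need not be an edge, in which case $G$ is \emph{not} a clique sum of the two sides and Theorem \ref{glue} does not apply directly. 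To repair that in your framework you would have to add the virtual edge $uv$ to each side, argue that the augmented sides are still $K_5\setminus e$-minor-free (true, since each side plus $uv$ is a minor of $G$ when $G$ is $2$-connected), form the $1$-sum, and then delete $uv$ at the end via Theorem \ref{main}. None of this is fatal, but it is not the ``iterated sequence of $0$-sums and $1$-sums'' you assert.

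The genuine gap is in your primary route, at exactly the step you flag. The apex statement itself is true (it follows from the same classification: every $3$-connected $K_5\setminus e$-minor-free graph is a spanning subgraph of $K_4$, $K_{3,3}$, a wheel, or the prism, each of which has such a vertex), but your Menger argument does not prove it. Three internally disjoint paths from $v$ to $X_1\cup\cdots\cup X_4$ guaranteed by $3$-connectivity have three distinct endpoints, but those endpoints may lie in only one or two of the branch sets; absorbing the paths into the branch sets they hit then makes $v$ adjacent to only one or two enlarged $X_i$'s, which yields at best $K_5$ minus two or more edges. Nothing in the argument lets you redirect the fans into three \emph{distinct} branch sets, and fixing this requires re-choosing the $K_4$ minor or a genuinely more careful rooted-minor argument -- this is where the real content of the classification lives. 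As written, the contradiction is not reached, so the direct route is incomplete; rely on the structure theorem instead.
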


\begin{proof}
It is known 
\cite[p.180]{Die2}
that,
if $G$ is edge-maximal graph without 
$K_5 \setminus e$ minor,
then $G$ is obtained by $1$-sum of the graphs
$K_3$, $K_{3,3}$, $W_n$, and the prism $C_3 \times K_2$.
Since the cut polytope of all of them are normal,
$\cpoly$ is normal by Theorem \ref{glue}.
By Theorem \ref{main}, the cut polytope of any subgraph of $G$ 
is normal. 
\end{proof}

\section{Sturmfels--Sullivant Conjecture }

Although Conjecture \ref{concon} is still open,
the following is known \cite[p.181]{Die2}
in graph theory.

\begin{Proposition}
Let $G$ be an edge-maximal graph without $K_5$ minor.
If $G$ has at least $3$ vertices, then $G$ 
is $1$ or $2$ sum of $K_3$, $K_4$, 4-connected plane triangulations and the graph $V_8$.
\end{Proposition}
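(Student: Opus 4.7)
The statement is Wagner's 1937 structure theorem for $K_5$-minor-free graphs, refined by a further decomposition of the plane-triangulation atoms. My plan has two layers.

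The first layer is Wagner's theorem itself: an edge-maximal graph $G$ without a $K_5$ minor can be built by iterated clique-sums along edges or triangles from plane triangulations and the M\"obius ladder $V_8$. The induction is on $|V(G)|$. If $G$ is not $3$-connected, take a minimal vertex cut $S$ with $|S|\le 2$; edge-maximality of $G$ forces $S$ to induce a clique (so $|S|=2$ in the non-trivial case), and each side of $S$ inherits edge-maximality without a $K_5$ minor. Splitting $G$ along the clique $S$ reduces to smaller instances, which the induction hypothesis handles. In the $3$-connected case one uses Kuratowski/Wagner planarity: if $G$ is planar, edge-maximality forces $G$ to be a plane triangulation; otherwise $G$ contains a $K_{3,3}$-subdivision, and a careful case analysis, using the absence of a $K_5$ minor together with edge-maximality, forces $G \cong V_8$. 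This non-planar case is the heart of Wagner's original paper.

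The second layer refines the plane triangulations so that only $4$-connected ones remain. Any plane triangulation $T$ with $|V(T)|\ge 5$ is either $4$-connected (a listed atom) or contains a separating triangle $\Delta$. In the latter case $\Delta$ splits $T$ into two smaller plane triangulations $T_1, T_2$, which glue back along $\Delta$ as a $2$-sum in the paper's convention. Inducting on $|V(T)|$ terminates with $4$-connected plane triangulations or the base cases $K_3$ and $K_4$. Composing with the first layer yields the claimed decomposition of $G$ as iterated $1$- or $2$-sums of $K_3$, $K_4$, $4$-connected plane triangulations, and $V_8$.

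The main obstacle is the $3$-connected step of Wagner's theorem: showing that an edge-maximal $3$-connected non-planar graph without a $K_5$ minor must be $V_8$. Since this classical fact is recorded in Diestel's textbook (as cited in the paper), my preferred route is to invoke Wagner's theorem as a black box and to carry out only the triangulation-refinement step explicitly; that step is elementary and terminates by induction on the number of vertices of each triangulation piece.
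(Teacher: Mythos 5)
The paper does not prove this proposition at all: it states it as a known fact with a bare citation to \cite[p.~181]{Die2}, so there is no in-paper argument to measure yours against. Your two-layer reconstruction is a correct derivation of the cited result. Layer one is Wagner's theorem, which you are right to invoke as a black box --- that is precisely what the paper's reference supplies, and its hard kernel (an edge-maximal, $3$-connected, non-planar graph without a $K_5$ minor must be $V_8$) is not something one should re-prove here. Layer two, the refinement of the plane-triangulation atoms, is elementary and correct, resting on one standard lemma you should name explicitly: a plane triangulation on at least five vertices is $3$-connected and every $3$-cut in it spans a triangle, so failure of $4$-connectivity yields a separating triangle; splitting along it produces two plane triangulations on fewer vertices meeting in that triangle, i.e.\ a $2$-sum in the paper's convention (recall a $k$-sum here is along $k+1$ vertices), and the induction bottoms out at $K_4$ and $4$-connected triangulations, with $K_3$ entering only through the edge-sums of layer one. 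The only points worth writing out when composing the layers are that iterated clique sums of clique sums are again iterated $1$- or $2$-sums of the listed atoms, and that each piece produced by a separating-triangle split is again a plane triangulation (its faces are faces of the original together with the triangle itself); both are immediate. In short, your proof is sound; the paper simply outsources the entire statement to the literature, so you have supplied an argument where the paper gives none.
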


The cut polytopes of $K_3$ and $K_4$ are normal.
Moreover,

\begin{Example}
{\em
Let $V_8$ be the graph
with the edge set
$$
\{
\{1,2\},
\{2,3\},
\{3,4\},
\{4,5\},
\{5,6\},
\{6,7\},
\{7,8\},
\{1,8\},
\{1,5\},
\{2,6\},
\{3,7\},
\{4,8\}
\}.
$$
Since $V_8$ has an induced cycle of length $5$,
$\cpolytope (V_8)$ is not compressed by \cite[Theorem 3.2]{Sul}.
It follows from Corollary \ref{coro} that
the cut polytope of any proper minor of $V_8$ is normal.
By the software {\tt Normaliz} \cite{BrIc}, we can check that $\cpolytope (V_8)$ is normal. 
}
\end{Example}

Thus, in order to prove Conjecture \ref{concon}, it is enough to prove 
one of the following
conjectures:

\begin{Conjecture}
The cut polytope $\cpoly$ is normal if
$G$ is a 4-connected plane triangulation.
\end{Conjecture}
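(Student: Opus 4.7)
The plan is to adapt the strategy used in the proofs of Theorems \ref{main}, \ref{suspension}, and \ref{just} by inducting on the second coordinate $\alpha$. Fix a 4-connected plane triangulation $G$. Since $G$ is planar it has no $K_5$ minor, so equation (1) describes $\ZZ(X_G)$ by cycle-parity conditions and Corollary \ref{qplus} describes $\QQ_+(X_G)$ by scaled cycle inequalities. Given $\binom{\xb}{\alpha} \in \ZZ(X_G) \cap \QQ_+(X_G)$ with $\alpha>0$, the inductive goal reduces to producing a single subset $S \subset V(G)$ for which $\binom{\xb - \delta_G(S)}{\alpha-1} \in \ZZ(X_G) \cap \QQ_+(X_G)$; iterating $\alpha$ times then writes $\binom{\xb}{\alpha}$ as an $\alpha$-fold sum of elements of $X_G$, giving the desired normality.

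The key tool I would invoke is planar duality. Let $G^*$ be the dual of the planar embedding; because every face of $G$ is a triangle and $G$ is 4-connected, $G^*$ is a simple 3-connected cubic planar graph of girth at least 4. Under the standard edge bijection $E \leftrightarrow E(G^*)$, cuts $\delta_G(S)$ correspond to even subgraphs (edge-disjoint unions of cycles) of $G^*$, the integrality condition (1) becomes a vertex degree-parity condition on $G^*$, and the induced cycle inequalities of Corollary \ref{qplus} correspond to facial inequalities of $G^*$. The vector $\xb \in \ZZ^E$ then becomes a nonnegative integer edge weighting $\xb^*$ on $G^*$, and the inductive step becomes: find an even subgraph $Y \subseteq G^*$ with indicator vector componentwise $\leq \xb^*$ such that $\xb^* - \mathbf{1}_Y$ still satisfies the translated hypothesis with $\alpha$ replaced by $\alpha-1$. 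The candidate construction is to pick $Y$ as a cycle of $G^*$ dual to a tight face-inequality of $\xb$, and extend it consistently through the planar structure; the 3-connectedness of $G^*$ combined with its girth condition should supply enough flexibility, plausibly via an Edmonds--Johnson/Seymour $T$-join style min-max argument in the planar dual.

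The main obstacle is precisely this existence claim. The Hilbert-basis theorem of \cite{Fugo} already gives $\xb \in \ZZ_+(A_G)$, i.e., \emph{some} nonnegative integer decomposition of $\xb$ into cuts, so the genuine difficulty is controlling the \emph{number} of cuts to be exactly $\alpha$. This is exactly the homogenization issue that distinguishes $\ZZ_+(X_G)$ from $\ZZ_+(A_G)$ and is the reason Proposition \ref{nonhomo} has no converse (Remark \ref{k5isce}). To manage it, I would run a double induction, on $\alpha$ and on the vertex count of $G$, shrinking $G$ by a vertex-splitting or edge-contraction operation that strictly reduces the vertex count while preserving the class of 4-connected plane triangulations; Theorem \ref{main} and Corollary \ref{mcmc} would transport normality down the reduction. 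As a first step I would verify the conjecture for the octahedron and the icosahedron via \texttt{Normaliz} as in Example 4.3 to locate possible obstructions to the reduction. Once the 4-connected plane triangulation case is settled, Theorem \ref{glue} together with the Proposition opening Section 4 would close the proof of Conjecture \ref{concon}.
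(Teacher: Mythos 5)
The statement you are trying to prove is presented in the paper as a \emph{conjecture}: the paper offers no proof of it, and indeed states explicitly that Conjecture \ref{concon} is still open and that settling the 4-connected plane triangulation case is precisely what remains. So there is no argument in the paper to compare yours against; the only question is whether your proposal closes the gap, and it does not.

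The genuine gap is the one you yourself flag: the inductive step asserting the existence of a single cut $\delta_G(S)$ (equivalently, an even subgraph $Y$ of the planar dual with $\mathbf{1}_Y \leq \xb^*$) such that $\binom{\xb - \delta_G(S)}{\alpha - 1}$ remains in $\ZZ(X_G) \cap \QQ_+(X_G)$. That existence claim \emph{is} the normality of $\cpoly$; reformulating it in the dual and gesturing at a ``$T$-join style min-max argument'' does not supply a proof, and the known Hilbert-basis result of \cite{Fugo} cannot be leveraged here precisely because of the failure of the converse of Proposition \ref{nonhomo} (Remark \ref{k5isce}), as you note. There is also a directional error in your fallback plan: Theorem \ref{main} and Corollary \ref{mcmc} transport normality from a graph \emph{to} its minors, so they can only carry normality \emph{down} a reduction sequence. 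An induction on the number of vertices that shrinks $G$ to a smaller 4-connected plane triangulation would need the opposite implication --- normality of the smaller graph implying normality of $G$ --- and no result in the paper (nor any obvious modification of Theorem \ref{glue}, which handles only $0$-, $1$- and $2$-sums, while a 4-connected triangulation admits no such decomposition) provides that. Checking the octahedron and icosahedron with {\tt Normaliz} is reasonable exploratory work but settles only finitely many cases. As it stands, your proposal is a research plan with the central difficulty intact, not a proof.
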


\begin{Conjecture}
The cut polytope $\cpoly$ is normal if
$G$ is a grid graph.
\end{Conjecture}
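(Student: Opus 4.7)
The plan is to induct on $m+n$, exploiting the fact that every grid graph $P_m\times P_n$ is planar and hence $K_5$-minor-free, so that Proposition~\ref{barahona} and Corollary~\ref{qplus} furnish a clean inequality description of $\cpoly$ and of $\QQ_+(X_G)$ in terms of induced cycles only.

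The base cases are $\min(m,n)\le 2$, where the grid is a path or a ladder. A ladder is series-parallel, hence $K_4$-minor-free, and has a vertex whose removal again leaves a $K_4$-minor-free graph; normality then follows from Corollary~\ref{coro}. Equivalently, one can build a ladder by iterated $2$-sums of $C_4$'s and invoke Theorem~\ref{glue}, using that $\cpolytope(C_4)$ is normal.

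For $\min(m,n)\ge 3$ the difficulty begins: grids admit no useful $0$-, $1$-, or $2$-sum decomposition, since their $2$-vertex separators only peel off corners. One would pursue two routes. \emph{Route (a)}: find an auxiliary graph $G^+$ with $\cpolytope(G^+)$ normal such that $P_m\times P_n = G^+\setminus e$ for some edge $e$, and conclude via Theorem~\ref{main}. A natural candidate for $G^+$ is a partial triangulation of the grid by $4$-face diagonals, chosen so that $G^+$ remains $K_5$-minor-free and decomposes via Theorem~\ref{glue} into smaller grids, ladders, or $K_4$-minor-free pieces covered by Corollary~\ref{coro}. \emph{Route (b)}: given $(\xb,\alpha)\in\ZZ(X_G)\cap\QQ_+(X_G)$, induct on $\alpha$ by producing a cut $\delta_G(S)$ for which $(\xb-\delta_G(S),\alpha-1)$ still lies in $\ZZ(X_G)\cap\QQ_+(X_G)$, exploiting both the $4$-cycle inequalities of Corollary~\ref{qplus} and the cycle-parity conditions of~(1), which are particularly simple for bipartite planar graphs.

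The main obstacle is that neither route is guaranteed to close. For Route (a), identifying an augmentation $G^+$ that is simultaneously $K_5$-minor-free and admits a clique-sum decomposition into normal pieces requires a genuinely new combinatorial construction. For Route (b), it is not a priori clear that a single cut can be peeled off at each step without violating some inequality of Corollary~\ref{qplus}. A successful proof probably needs either an explicit unimodular triangulation of $\cpoly$, or a new preservation operation, stronger than $0$-, $1$-, or $2$-sums, which reduces grids to already-handled pieces.
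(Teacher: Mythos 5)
This statement is one of the paper's open conjectures: the author gives no proof of it, and in fact presents it (together with the 4-connected plane triangulation conjecture) as a \emph{sufficient} target whose resolution would settle Conjecture \ref{concon}. Your proposal is likewise not a proof. Your base cases are fine -- ladders are indeed iterated $2$-sums of $C_4$'s, so Theorem \ref{glue} (or Corollary \ref{coro}, since ladders are $K_4$-minor-free) handles $\min(m,n)\le 2$ -- but the entire content of the conjecture lives in the case $\min(m,n)\ge 3$, and there both of your routes are left as declared hopes rather than arguments. Route (a) would need an augmented graph $G^+$ that is simultaneously $K_5$-minor-free, decomposable by $0$-, $1$-, or $2$-sums into pieces already known to be normal, and an edge-supergraph of the grid; no such $G^+$ is exhibited, and producing one is not obviously easier than the original problem. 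Route (b) is the standard ``peel off one cut'' strategy for proving normality directly from Corollary \ref{qplus} and the parity condition (1); the existence of a peelable cut $\delta_G(S)$ at every step is exactly the statement to be proved, and you give no mechanism for finding $S$.

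It is also worth recognizing why this conjecture resists reduction to the paper's toolkit: every planar graph is a minor of a sufficiently large grid, so by Corollary \ref{mcmc} normality of all grid cut polytopes would imply normality for \emph{all} planar graphs, in particular for all 4-connected plane triangulations -- which is the other, equally open, conjecture of Section 4. So any argument built only from Theorems \ref{main} and \ref{glue} and Corollary \ref{coro} (whose reach is bounded by $K_4$-minor-free pieces and small clique sums) cannot close the gap; as you yourself suspect at the end, genuinely new input -- e.g.\ an explicit unimodular cover or a new normality-preserving operation beyond $2$-sums -- is required. Your write-up is a reasonable survey of why the known techniques fall short, but it should not be presented as a proof attempt with a fixable gap; the missing step is the theorem itself.
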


\bigskip

\noindent
{\bf Acknowledgements.}\ 
The results on this paper were obtained
while the author was visiting University of Washington
from August to December 2008.
He appreciates the warm hospitality he received from Rekha R. Thomas and 
people of Department of Mathematics,
University of Washington.
In addition, the author thanks Rekha R. Thomas for 
introducing
cut polytopes to him
together with important references and useful discussions.
This research was supported by JST, CREST.

\bigskip

\bigskip

Department of Mathematics, College of Science, Rikkyo University,
Toshima-ku, Tokyo 171-8501, Japan.
{\tt ohsugi@rkmath.rikkyo.ac.jp}

\end{document}